\newtheorem{prop}{Proposition}[section]
\newtheorem{theorem}[prop]{Theorem}
\newtheorem{lemma}[prop]{Lemma}
\newtheorem{conjecture}[prop]{Conjecture}
\theoremstyle{definition}
\theoremstyle{remark}
\newtheorem*{remark*}{Remark}
\newtheorem{remark}[prop]{Remark}
\theoremstyle{theorem}
\newcommand{\R}{\mathbb{R}}
\newcommand{\N}{\mathbb{N}}
\newcommand{\Z}{\mathbb{Z}}
\newcommand{\Aut}{\text{\textup{Aut}}\,}
\numberwithin{equation}{section}
\title{Horofunctions on graphs of linear growth}
\date{}
\author{Matthew C. H. Tointon}
\address{MT: Laboratoire de Math\'ematiques d'Orsay, Univ.~Paris-Sud, CNRS, Universit\'e Paris-Saclay, 91405 Orsay, France}
\email{matthew.tointon@math.u-psud.fr}
\author{Ariel Yadin}
\address{AY: Department of Mathematics, Ben-Gurion University of the Negev, Beer-Sheva, Israel}
\email{yadina@bgu.ac.il}
\begin{document}
\maketitle

\begin{abstract}
We prove that a linear growth graph has finitely many horofunctions.
This provides a short and simple proof that any finitely generated infinite group of linear growth
is virtually cyclic.
\end{abstract}
\renewcommand{\abstractname}{R\'esum\'e}
\begin{abstract}
Nous montrons qu'un graphe \`a croissance lin\'eaire admet un nombre fini d'horofonctions. Cel\`a donne une preuve courte et simple que chaque groupe infini de type fini \`a croissance lin\'eaire est virtuellement cyclique.
\end{abstract}

\section{Introduction}

%Let $G$ be a finitely generated infinite group, and consider some fixed Cayley graph of $G$.
%This defines a metric which we will denote by $d(\cdot,\cdot)$.
%$G$ is said to have {\em linear volume growth} if the balls in this metric grow linearly in the radius.
%
%Given an element $z \in G$ we define the \emph{Busemann function} $b_z:G\to\Z$ via $b_z(y) = d(z,y) - d(z,1)$. Given a geodesic ray $\omega=(z_1,z_2,\ldots)$ in $G$ we define the \emph{horofunction} $h_\omega:G\to\Z$ by $h_\omega(y)=\lim_{n\to\infty}b_{z_n}(y)$. Note that $h_\omega$ is not constant, 
%%for example because $h_\omega(z_1)-h_\omega(z_2)=1$.  
%and in fact $h_\omega(z_n) = -n$ which shows that $h_\omega$ is unbounded.

Let $\Gamma$ be an infinite, connected, locally finite graph,
and denote by $d(\cdot,\cdot)$ the graph metric on $\Gamma$.
Let $o\in\Gamma$. Then $\Gamma$ is said to have {\em linear volume growth} if the balls about $o$ for the metric $d$ grow at most linearly in the radius. The graph $\Gamma$ is said to have {\em polynomial volume growth} if these balls grow at most polynomially.

Given an element $z \in \Gamma$ we define the \emph{Busemann function} $b_z:\Gamma\to\Z$ via $b_z(y) = d(z,y) - d(z,o)$. Given a geodesic ray $\omega=(z_1,z_2,\ldots)$ in $\Gamma$ we define the \emph{horofunction} $f_\omega:G\to\Z$ by $f_\omega(y)=\lim_{n\to\infty}b_{z_n}(y)$. It is a well-known and simple fact that this limit exists. Note that $f_\omega$ is not constant, 
%for example because $h_\omega(z_1)-h_\omega(z_2)=1$.  
and in fact $f_\omega(z_n) = -n$, which shows that $f_\omega$ is unbounded.

%Our proof of Theorem \ref{thm:lin} rests on the following result.
%\begin{theorem} \label{thm:lin.Buse}
%Let $\Gamma$ be an infinite Cayley graph of linear volume growth.  
%Then the set of horofunctions on $G$ is finite.
%\end{theorem}

\begin{theorem} \label{thm:lin.Buse}
Let $\Gamma$ be an infinite, connected, locally finite graph of linear volume growth.  
Then the set of horofunctions on $\Gamma$ is finite.
\end{theorem}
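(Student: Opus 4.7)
My plan is to combine three ingredients: (i) a uniform bound on the number of ends of $\Gamma$, (ii) the association of each horofunction to a single end, and (iii) a bottleneck argument inside each end.

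I would first record the basic properties of horofunctions. Each $f = f_\omega$ is integer-valued, $1$-Lipschitz with $f(o) = 0$, and has the \emph{descending neighbor} property: every $y$ has a neighbor $y'$ with $f(y') = f(y) - 1$, obtained as the first step of a geodesic from $y$ to $z_n$ for $n$ large (where $d(z_n, y) = n + f(y)$). Iterating gives an infinite \emph{descending ray} from every vertex. For the end bound: if $\Gamma \setminus B(o, r)$ has $k$ infinite components, each contains a geodesic ray and so contributes at least $r$ distinct vertices to $B(o, 2r) \setminus B(o, r)$; hence $kr \le |B(o, 2r)| \le 2Cr$, giving $k \le 2C$ and at most $2C$ ends. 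A separate argument shows that each horofunction $f$ determines a unique end $\mathcal{E}_f$: for $y$ deep in a different end, all paths to the defining ray must cross the bounded separator $B(o, r_0)$, forcing $f(y) \approx d(o, y)$, whereas $f$ is very negative deep in $\mathcal{E}_f$.

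The main step is to show that each end $\mathcal{E}$ carries only finitely many horofunctions. Let $T$ be the tail of $\mathcal{E}$. Linear growth yields, for every sufficiently large $r$, a \emph{bottleneck level} $n \in [r, 2r]$ with $|A_n| \le 2C$, where $A_n := T \cap S_n$, because $\sum_{n \in [r, 2r]} |T \cap S_n| \le |B(o, 2r)| \le 2Cr$. My key lemma is: if two horofunctions $f_1, f_2$ in end $\mathcal{E}$ agree on $A_n$, then they must agree on $B(o, n - 1)$. Indeed, for $y \in B(o, n - 1)$ the $f_1$-descending ray from $y$ enters $T$ precisely through some vertex $a_1 \in A_n$, yielding $f_1(y) = f_1(a_1) + d(y, a_1)$; combining this with the $1$-Lipschitz bound for $f_2$ and $f_1(a_1) = f_2(a_1)$ gives $f_2(y) \le f_1(y)$, and the symmetric argument using an $f_2$-descending ray yields the reverse inequality.

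The hard part will be converting this key lemma into the desired finiteness. I would need to show that $f|_{A_n}$ takes only boundedly many values, uniformly in $n$ and across horofunctions in $\mathcal{E}$. Intuitively, vertices of $A_n$ must lie within bounded distance of the defining ray $\omega$---so that $f(a) \in [-n, -n + O(1)]$ for $a \in A_n$---and the $\le 2C$ bound on $|A_n|$ combined with the thin structure of $T$ near the bottleneck should force this. Establishing this uniform bound---tying the combinatorial thinness of the bottleneck to geometric proximity to $\omega$---is the main obstacle. Once it is in hand, combining with the key lemma (iterated across bottleneck levels) and the bound on the number of ends yields finitely many horofunctions in total.
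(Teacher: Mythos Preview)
Your approach is genuinely different from the paper's, and the gap you yourself flag as ``the main obstacle'' is real and not merely technical. The heuristic you offer---that vertices of $A_n$ lie within bounded distance of the defining ray $\omega$---is false in general. Consider two copies of $\N$ joined at $0$, with extra ``rung'' edges between $n_1$ and $n_2$ only at $n=2^k$. This graph has linear growth and a single end; the sphere $S_n$ consists of $\{n_1,n_2\}$, so every level is a bottleneck with $|A_n|=2$. For $n=3\cdot 2^{k-1}$ one computes $d(n_1,n_2)=2^k+1$, so $n_2$ sits at unbounded distance from the ray $\omega_1=(0,1_1,2_1,\ldots)$. Yet $f_{\omega_1}(n_2)=1-n$ is still within $1$ of $-n$: the reason is not proximity to $\omega_1$ at level $n$, but the existence of a rung \emph{further out} along which a near-geodesic to $\omega_1$ can pass. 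So the mechanism controlling $f(a)+n$ on $A_n$ is global rather than local, and your proposed route to the uniform bound does not work. Without a replacement argument, the ``hard part'' is essentially equivalent in difficulty to the theorem itself: once you know there are only finitely many horofunctions, the uniform bound on $\{f|_{A_n}\}$ is trivial, but you have no independent handle on it.

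The paper avoids this difficulty entirely by working with rays rather than with values of $f$. It passes to an infinite sequence of spheres of constant minimal size $k$, and builds an $\N$-partite graph $\hat\Gamma$ on them whose edges record the existence of geodesic segments. The key step (Proposition~\ref{prop:graph}) is purely combinatorial: in any $\N$-partite graph with partite sets of size $k$, there exist $k$ monotone paths such that every infinite monotone path meets one of them infinitely often. This is proved by induction on $k$ using Hall's Marriage Theorem; when some level fails to admit a matching to all later levels, the graph splits into two $\N$-partite pieces of strictly smaller width. Since two geodesic rays with infinite intersection give the same horofunction, one gets at most $k$ horofunctions---with no need to estimate $f$ on a bottleneck at all. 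Your ends and key-lemma observations are correct and pleasant, but the paper's inductive matching argument is what actually closes the gap.
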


A finitely generated group $G$ is said to have polynomial (respectively, linear) volume growth 
if some (and hence every) Cayley graph of $G$ has polynomial (respectively, linear) volume growth. A remarkable theorem of Gromov's states that a finitely generated group of polynomial volume growth contains a nilpotent subgroup of finite index \cite{gromov}. As an application of Theorem \ref{thm:lin.Buse} we give a short argument to prove the linear-growth case of Gromov's theorem, as follows.
\begin{theorem}\label{thm:lin}
Let $G$ be a finitely generated infinite group of linear volume growth. 
Then $G$ contains a cyclic subgroup of finite index.
\end{theorem}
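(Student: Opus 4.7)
My plan is to combine Theorem~\ref{thm:lin.Buse} with the natural $G$-action on horofunctions to extract a non-trivial homomorphism from a finite-index subgroup of $G$ to $\Z$, and then use linear growth a second time to show the kernel of this homomorphism is finite. Fix a Cayley graph $\Gamma$ of $G$ with basepoint $o=e$; it is infinite, connected, locally finite and of linear growth. A standard K\"onig's lemma argument produces a geodesic ray $\omega=(z_1,z_2,\ldots)$ in $\Gamma$ starting at $o$, whose horofunction $f_\omega$ satisfies $f_\omega(z_n)=-n$ and is in particular unbounded. Left multiplication by $g\in G$ is an isometry of $\Gamma$, and a short computation shows that the formula $(g\cdot f)(y)=f(g^{-1}y)-f(g^{-1})$ defines a left action of $G$ on the set of horofunctions (with $g\cdot f_\omega=f_{g\omega}$); by Theorem~\ref{thm:lin.Buse} this set is finite, so the kernel $K$ of the action has finite index in $G$.

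The key observation I would make next is that the fixed-point equation $k\cdot f=f$ for $k\in K$ rearranges to $f(kg)=f(k)+f(g)$ for every $g\in G$. Applied to $f=f_\omega$, this makes $\phi:=f_\omega|_K:K\to\Z$ a group homomorphism. To see it is non-trivial, write each $z_n$ as $k_nr_{i(n)}$ with $k_n\in K$ and $r_{i(n)}$ in a finite transversal for $K$ in $G$; the identity above gives $-n=f_\omega(z_n)=\phi(k_n)+f_\omega(r_{i(n)})$, so $|\phi(k_n)|\to\infty$ and in particular $K/\ker\phi\cong\Z$.

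The step I expect to be the main obstacle is showing that $K_0:=\ker\phi$ is finite, for which I plan a direct growth count. Since $K$ has finite index in $G$ it inherits linear growth, so $|B_K(r)|\le Cr$ for some constant $C$ and any fixed finite generating set of $K$. Pick $z\in K$ with $\phi(z)\ne 0$ and let $L$ be its word-length. Given distinct $h_1,\ldots,h_N\in K_0\cap B_K(r)$, the elements $h_iz^j$ with $|j|\le r/L$ all lie in $B_K(2r)$, and they are pairwise distinct because $\phi(h_iz^j)=j\phi(z)$ determines $j$ and then $h_i$; this gives $N(2\lfloor r/L\rfloor+1)\le 2Cr$ and hence a uniform bound on $N$ as $r\to\infty$, forcing $K_0$ to be finite.

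Once $K_0$ is finite, lifting a generator of $K/K_0\cong\Z$ to $z\in K$ gives $\langle z\rangle\cap K_0=\{e\}$ and $K=\bigsqcup_{n\in\Z}z^nK_0$, so $\langle z\rangle$ has index $|K_0|$ in $K$; since $K$ itself has finite index in $G$, this exhibits a cyclic subgroup of finite index in $G$.
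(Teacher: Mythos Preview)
Your proof is correct and follows essentially the same route as the paper: invoke Theorem~\ref{thm:lin.Buse}, use the $G$-action on horofunctions to find a finite-index subgroup on which some horofunction restricts to a nontrivial homomorphism to $\Z$ (the paper packages this as Lemma~\ref{lem:fin.orbit}), and then use linear growth again to force the kernel to be finite. Where the paper simply asserts that the kernel is finite and that finite-by-$\Z$ implies $\Z$-by-finite ``by standard methods'', you spell out both steps explicitly with the growth count and the coset decomposition, but the underlying strategy is the same.
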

In fact, Gromov's theorem implies that a group of subquadratic growth is virtually cyclic, and this has also been proved by elementary methods by Justin \cite{justin}, van den Dries \& Wilkie \cite{vdd-wilkie} and Imrich \& Seifter \cite{imr-sei}, the last two of these giving bounds on the index of the cyclic subgroup in terms of the volume growth. Nonetheless, the present proof is completely different to all of those and rather short, so we record it here.

We prove Theorem \ref{thm:lin.Buse} in Section \ref{sec:horo}. The proof of Theorem \ref{thm:lin} is in Section \ref{sec:linear}.

Let us mention a related (probably much more difficult) question.
\begin{conjecture}
Let $G$ be a Cayley graph of polynomial volume growth.
Then the set of horofunctions on $G$ is countable.
\end{conjecture}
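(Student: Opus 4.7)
The plan is to combine Gromov's theorem on polynomial growth with the asymptotic geometry of nilpotent groups. Gromov's theorem gives a nilpotent subgroup $N \le G$ of finite index, and (up to a choice of generators) any Cayley graph of $G$ is quasi-isometric to a Cayley graph of $N$. A preliminary step, which already requires some care, is to verify that the horoboundary of a Cayley graph is insensitive to such equivalences in a sense strong enough to preserve countability; granting this, and passing to a further finite-index subgroup if necessary, it would suffice to treat the case in which $G$ is a torsion-free nilpotent group.

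For the nilpotent case the plan is to associate to each horofunction a piece of ``asymptotic data'' lying in a countable set. By Pansu's theorem, after rescaling, the ray $\omega=(z_n)$ converges (subsequentially) to a geodesic in the asymptotic cone of $N$, which is a graded nilpotent Lie group $N_\infty$ equipped with a Carnot--Carath\'eodory metric $d_\infty$. The limit ray defines a continuous Busemann function $\beta$ on $N_\infty$, and one would aim to show that $f_\omega$ is determined by $\beta$ together with a bounded integer-valued correction living on a Cayley graph of lower nilpotency class. An induction on the nilpotency step of $N$ would then reduce the problem to the abelian case, where countability of the horoboundary should follow from the fact that word-metric norms on $\Z^d$ are polyhedral, together with a countable family of integer ``height'' parameters classifying the horofunctions supported on each face of the unit ball (as one sees explicitly by hand for small examples like $\Z^2$ with the standard generators).

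The principal obstacle, and presumably why the authors flag the conjecture as ``probably much more difficult'', is that the Carnot--Carath\'eodory unit ball of a non-abelian nilpotent group (such as the Heisenberg group) is in general not polyhedral, so continuous Busemann functions on $N_\infty$ already form an uncountable family. Countability of the discrete horoboundary must therefore come from rigidity imposed by the integer-valued nature of $f_\omega$: only countably many CC-Busemann functions can arise as asymptotic limits of honest discrete Busemann functions in the Cayley graph. Making this rigidity precise---and in particular controlling the sub-leading error terms between discrete geodesics and their asymptotic counterparts with enough accuracy to recover an integer-valued function uniquely from its scaling limit---appears to be the crux of the difficulty, and is the step I would expect to resist any short and elementary approach of the kind used for Theorem~\ref{thm:lin.Buse}.
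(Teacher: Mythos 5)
The statement you are addressing is presented in the paper as an open conjecture: the authors give no proof, describe it as ``probably much more difficult'' than the linear-growth case, and its stated interest is that a proof would yield an \emph{alternative} route to Gromov's theorem via a variant of Lemma \ref{lem:fin.orbit}. Your plan begins by invoking Gromov's theorem, an approach the authors explicitly flag as one that ``would somehow miss the point''; this does not by itself make the plan wrong, but it forfeits the main motivation for the conjecture, and in any case what you have written is a research programme rather than a proof.

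Concretely, there are at least three genuine gaps. (i) The reduction to a torsion-free nilpotent group is not routine: the horofunction boundary is \emph{not} a quasi-isometry invariant---it changes even under a change of generating set of $\Z^2$---so the claim that countability survives passage to a quasi-isometric Cayley graph, or to a finite-index subgroup with its own word metric, is itself unproved and possibly false. (ii) The inductive step, namely that $f_\omega$ is determined by its Carnot--Carath\'eodory scaling limit together with ``a bounded integer-valued correction living on a Cayley graph of lower nilpotency class,'' is asserted without any mechanism; Pansu's theorem controls the leading-order asymptotics of the metric, whereas a horofunction records the $O(1)$ behaviour of $d(z_n,\cdot)-d(z_n,o)$, and nothing in your sketch bridges that gap. (iii) You correctly observe that continuous Busemann functions on a nonabelian Carnot group form an uncountable family, so the entire weight of the argument rests on a ``rigidity'' statement---that only countably many of them arise as scaling limits of integer-valued discrete Busemann functions, and that each such limit has only countably many discrete preimages---which you name as the crux but do not prove and for which no technique is proposed. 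Since that rigidity is essentially the content of the conjecture itself, the proposal does not constitute a proof; only the abelian base case (polyhedrality of word-metric balls on $\Z^d$ and the resulting countable classification of limits along geodesic rays) is close to being complete.
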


Proving this conjecture would provide an alternative proof to Gromov's theorem
(by using a variant of Lemma \ref{lem:fin.orbit} below).  This has been suggested by Karlsson \cite{anders}.
One method to prove this conjecture could be using the structure of finitely generated nilpotent groups, 
and relying on Gromov's theorem, but that would somehow miss the point.  (For example, in \cite{walsh} 
Walsh shows that nilpotent groups always have a finite orbit in the space of horofunctions.  It seems that 
this can be extended to virtually nilpotent groups as well.)
It would be interesting to prove this conjecture even in the quadratic growth case 
without using Gromov's theorem, since that would imply a new proof of the characterization of 
recurrent groups (which are finite extensions of $\Z$ or $\Z^2$).

\section{Horofunctions on a graph of linear growth}\label{sec:horo}

For a graph $\Gamma$, we say that $\Gamma$ is 
\emph{$\N$-partite on a sequence $(\Gamma_n)_{n\in \N}$} of disjoint sets $\Gamma_n$
if $\Gamma$ has vertex set $\bigcup_{n \in \N} \Gamma_n$ and the neighbours of every $x\in \Gamma_n$ lie in $\Gamma_{n-1}\cup \Gamma_{n+1}$. 
We call the sets $\Gamma_n$ the \emph{partite sets} of $\Gamma$. 
We call a path in $\Gamma$ \emph{monotone} if it has at most one vertex in each $\Gamma_n$. 
%We similarly define a $\N$-partite graph, where the indices of the partite sets are in $\N$.
%There may existing or better terms than these, but they will do for the moment.

The proof of Theorem \ref{thm:lin.Buse} essentially rests on the following graph-theoretic result.
\begin{prop}\label{prop:graph}
Let $\Gamma$ be an $\N$-partite graph whose partite sets all have cardinality $k \geq 1$. 
Then there exist monotone paths $\gamma_1,\ldots,\gamma_k$ in $\Gamma$
such that every infinite monotone path in $\Gamma$ has infinite intersection with some $\gamma_j$.
\end{prop}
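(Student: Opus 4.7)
The plan is to prove this by induction on $k$, peeling off one infinite monotone path at each step.

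The base case $k=1$ is immediate. Each $\Gamma_n$ is a single vertex, so $\Gamma$ is either a one-sided infinite path or a disjoint union of finite paths; in the former case any two infinite monotone paths share a common tail, so $\gamma_1 = \Gamma$ works, and in the latter case there are no infinite monotone paths and the conclusion is vacuous.

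For the inductive step, suppose the result holds for $k-1$. If $\Gamma$ contains no infinite monotone path, take any $k$ single vertices as the $\gamma_j$ and the conclusion holds vacuously. Otherwise, fix any infinite monotone path $\gamma = (v_{n_0}, v_{n_0+1}, v_{n_0+2}, \ldots)$ with $v_n \in \Gamma_n$, and form the induced subgraph $\Gamma' \subset \Gamma$ on the partite sets $\Gamma'_m := \Gamma_{n_0 + m} \setminus \{v_{n_0 + m}\}$, $m \in \N$. Then $\Gamma'$ is $\N$-partite with every partite set of cardinality exactly $k-1$. The inductive hypothesis applied to $\Gamma'$ yields monotone paths $\gamma'_1, \ldots, \gamma'_{k-1}$ in $\Gamma'$, each of which is also a monotone path in $\Gamma$. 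Set $\gamma_k := \gamma$ and $\gamma_j := \gamma'_j$ for $1 \leq j \leq k-1$.

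To verify the covering property, let $\alpha$ be any infinite monotone path in $\Gamma$. If $\alpha$ meets $\gamma$ at infinitely many levels, then $\alpha$ has infinite intersection with $\gamma_k$ and we are done. Otherwise, there exists $N \geq n_0$ such that $\alpha$ avoids $\gamma$ at every level $\geq N$. The tail of $\alpha$ from level $N$ onwards is then an infinite monotone path in $\Gamma'$, which by the inductive hypothesis has infinite intersection with some $\gamma'_j$; hence $\alpha$ has infinite intersection with $\gamma_j$ as well.

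I do not anticipate any substantial obstacle. The essential structural fact is that ``having infinite intersection with a fixed path'' is a tail property of $\alpha$, so the finitely many levels below $n_0$ discarded in passing from $\Gamma$ to $\Gamma'$ cause no trouble, and the single deleted vertex per level is exactly what is needed to reduce cardinality to $k-1$.
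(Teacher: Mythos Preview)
Your argument is correct. The dichotomy you use---that an infinite monotone path $\alpha$ either meets the fixed path $\gamma$ infinitely often, or has a tail lying entirely in the induced subgraph $\Gamma'$---is exactly what makes the induction go through, and passing to the induced subgraph on $\bigcup_{m}(\Gamma_{n_0+m}\setminus\{v_{n_0+m}\})$ legitimately drops the partite cardinality by one.

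The paper takes a different route. Rather than peeling off a single path, it first observes (via Lemma~\ref{lem:match}) that if every consecutive pair of partite sets admits a matching, then the vertex set decomposes into $k$ monotone paths and the result is immediate. When this fails for arbitrarily large gaps, Hall's Marriage Theorem yields, along a subsequence of levels $(m_j)$, subsets $V_{m_j}\subset\Gamma_{m_j}$ of some fixed size $v<k$ that ``trap'' all monotone paths originating in a fixed set $U$; the graph then splits into two $\N$-partite pieces of widths $v$ and $k-v$, and induction is applied to each. Your approach is more elementary---it avoids Hall's theorem entirely and reduces the width by exactly one at each step---while the paper's approach is closer in spirit to a structural decomposition and in the favourable (matching) case actually produces a \emph{partition} of the vertices into $k$ monotone rays. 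For the proposition as stated, however, your argument is both shorter and self-contained.
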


Recall that in a bipartite graph on two finite sets $X_1,X_2$ of equal cardinality, a \emph{matching} is a subgraph in which each element of $X_1$ is connected to precisely one element of $X_2$, and vice versa. Hall's Marriage Theorem \cite{hall} states that if there is no matching then there exists some subset $Y\subset X_1$ such that the neighbourhood of $Y$ in $X_2$ has strictly smaller cardinality than $Y$ itself.
\begin{lemma}\label{lem:match}
If for each $n\in\N$ there is a matching in $\Gamma$ between $\Gamma_n$ and 
$\Gamma_{n+1}$ then $\Gamma$ satisfies Proposition \ref{prop:graph}.
\end{lemma}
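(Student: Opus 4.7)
The plan is to use the matchings to construct $k$ pairwise vertex-disjoint infinite monotone paths which together cover every vertex of $\Gamma$, and then conclude by a pigeonhole argument.

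Since $|\Gamma_n| = |\Gamma_{n+1}| = k$, each matching in the hypothesis is in fact a perfect matching, i.e.\ a bijection $\phi_n : \Gamma_n \to \Gamma_{n+1}$ whose graph consists of edges of $\Gamma$. For each of the $k$ starting vertices $v \in \Gamma_0$, I would set $v_0 = v$ and $v_{m+1} = \phi_m(v_m)$, obtaining an infinite monotone path $\gamma_v = (v_0, v_1, v_2, \ldots)$ with exactly one vertex in each partite set. Because each $\phi_n$ is a bijection, the resulting $k$ paths $\gamma_1, \ldots, \gamma_k$ are pairwise vertex-disjoint, and their vertex sets partition $\bigcup_{n \in \N} \Gamma_n = V(\Gamma)$, with each $\gamma_j$ contributing exactly one vertex to each $\Gamma_n$.

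Next I would show that every infinite monotone path $\pi$ in $\Gamma$ eventually meets every partite set of sufficiently large index. Since consecutive vertices of any path in $\Gamma$ lie in adjacent partite sets, successive levels along $\pi$ differ by exactly $1$; monotonicity prevents revisiting a level, so after any change of direction the level sequence is strictly monotone. As levels are nonnegative integers, a strictly decreasing run must terminate, so the level sequence of $\pi$ must ultimately be strictly increasing, meaning $\pi$ has exactly one vertex $u_n \in \Gamma_n$ for every sufficiently large $n$.

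For each such $n$, the vertex $u_n$ lies in exactly one of the paths $\gamma_1,\ldots,\gamma_k$; by pigeonhole, some $\gamma_j$ contains $u_n$ for infinitely many $n$, so $\pi \cap \gamma_j$ is infinite, as required. There is no real obstacle to this argument once the construction is written down: the only point needing care is the justification that every infinite monotone path visits all sufficiently large levels, which uses crucially that the partite sets are indexed by $\N$ rather than $\Z$.
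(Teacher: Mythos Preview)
Your proof is correct and follows exactly the approach sketched in the paper: use the matchings to partition the vertex set into $k$ infinite monotone paths, then apply pigeonhole. One small remark: since a monotone path never revisits a level and consecutive levels differ by $\pm 1$, the level sequence is in fact strictly monotone from the very first step (no change of direction is possible), so your intermediate discussion of direction changes is unnecessary---but the conclusion stands.
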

\begin{proof}
It is easy to see that the existence of such matchings implies that the vertices of $\Gamma$ may be partitioned into $k$ monotone paths, and that this is sufficient to satisfy the proposition.
\end{proof}

%Given an $\N$-partite graph $\Gamma$ and a subsequence $n_1,n_2,\ldots$ of $\N$, we may use $\Gamma$ to induce\footnote{Maybe need a better word than `induce' here, since it already has a different meaning in graph theory.} an $\N$-partite graph on $\Gamma_{n_1},\Gamma_{n_2},\ldots$ by placing an edge between $x\in\Gamma_{n_i}$ and $x'\in\Gamma_{n_{i+1}}$ if and only if there exists a monotone path between $x$ and $x'$ in $\Gamma$. The following is then immediate.

Given an $\N$-partite graph $\Gamma$ and a sequence $N = (n_j)_j \subset \N$, 
we may define
a new graph on $\Gamma_N : = \bigcup_{j} \Gamma_{n_j}$
by placing an edge between $x \in \Gamma_{n_j}$ and $x' \in\Gamma_{n_{j+1}}$ 
if and only if there exists a monotone path between $x$ and $x'$ in $\Gamma$. 
(Note that $\Gamma_N$ is an $\N$-partite graph with partite sets $\Gamma_{n_j}$.)
The following is then immediate.

\begin{lemma}\label{lem:ind}
If there exists a sequence $N = (n_j)_j$ such that $\Gamma_N$ satisfies 
Proposition \ref{prop:graph}, then the conclusion of Proposition \ref{prop:graph} 
holds for $\Gamma$ as well.
\end{lemma}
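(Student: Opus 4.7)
The plan is to construct the required paths $\gamma_1,\ldots,\gamma_k$ in $\Gamma$ by lifting the paths guaranteed in $\Gamma_N$, and then to verify the covering condition by showing that every infinite monotone path in $\Gamma$ restricts to an infinite monotone path in $\Gamma_N$.

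So, suppose $\gamma'_1,\ldots,\gamma'_k$ are monotone paths in $\Gamma_N$ satisfying the conclusion of Proposition \ref{prop:graph}. Each edge of $\gamma'_j$, joining some $u \in \Gamma_{n_i}$ to some $v \in \Gamma_{n_{i+1}}$, arises by the definition of $\Gamma_N$ from at least one monotone path $p_{u,v}$ in $\Gamma$ between $u$ and $v$; I fix such a choice for every edge of every $\gamma'_j$. Concatenating these lifts along $\gamma'_j$ in order yields a walk $\gamma_j$ in $\Gamma$. To see that $\gamma_j$ is itself monotone in $\Gamma$, observe that the partite-set indices along any monotone path in $\Gamma$ are strictly monotone (consecutive vertices differ in index by $\pm 1$ and no index is repeated). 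Hence each lift $p_{u,v}$ traverses the integer interval between $n_i$ and $n_{i+1}$ exactly once, and consecutive lifts overlap only at their common endpoint; so $\gamma_j$ visits each partite set of $\Gamma$ at most once.

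For the covering condition, let $\omega$ be an infinite monotone path in $\Gamma$. By the same strict-monotonicity remark, the sequence of partite-set indices along $\omega$ is strictly monotone and, being bounded below in $\N$, eventually strictly increasing. Consequently $\omega$ visits $\Gamma_{n_j}$ at a unique vertex $w_j$ for all sufficiently large $j$. The subpath of $\omega$ between $w_j$ and $w_{j+1}$ is a monotone path in $\Gamma$, so by definition of $\Gamma_N$ the vertices $w_j$ and $w_{j+1}$ are adjacent in $\Gamma_N$, and thus $(w_j)_j$ forms an infinite monotone path in $\Gamma_N$. Invoking the hypothesis on $\Gamma_N$, this path has infinite intersection with some $\gamma'_j$; since the vertex set of $\gamma'_j$ is contained in that of $\gamma_j$ by construction, $\omega$ has infinite intersection with $\gamma_j$, as required.

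There is no deep obstacle: the argument is essentially a functoriality check. The only points requiring any care are the monotonicity of the concatenated lift (handled by the strict-monotonicity of partite-set indices along monotone paths) and the fact that an infinite monotone path must eventually visit every partite set of sufficiently large index, which follows from the same observation together with the boundedness of $\N$ from below.
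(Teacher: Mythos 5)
Your proof is correct and is precisely the natural lifting/restriction argument that the paper treats as immediate (the paper gives no written proof of this lemma). The two points you flag as needing care — strict monotonicity of partite-set indices along a monotone path, and the fact that an infinite monotone path therefore meets every $\Gamma_{n_j}$ from some point on — are exactly the right details to check, and you check them correctly.
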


\begin{proof}[Proof of Proposition \ref{prop:graph}]
We proceed by induction on $k$. The base case $k=1$ is easy, so we assume that $k>1$. We may also delete every element of $\Gamma$ that does not lie in any infinite monotone path; the only potential problem with this is that the $\Gamma_n$ may no longer all have the same cardinality, but, using Lemma \ref{lem:ind}, we may fix this by passing to a subsequence.

Let $N = (n_j)_{j}$ be a sequence and consider the $\N$-partite graph $\Gamma_N$.
If for every $j$ there exists a matching (in the graph $\Gamma_N$) 
between $\Gamma_{n_j}$ and $\Gamma_{n_{j+1}}$,
then we are done by combining Lemmas \ref{lem:match} and \ref{lem:ind}.

Thus, we assume that a sequence as above does not exist. 
Specifically, by Hall's Marriage Theorem,
there exists $n$ such that for any $m>n$, 
there exist $U_m \subset \Gamma_n$ and $V_m \subset \Gamma_m$ 
such that
$1 \leq |V_m| < |U_m| \leq k$, and such that every monotone path from $U_m$ to $\Gamma_m$ ends in $V_m$. Without loss of generality we assume that the sets $V_m$ are minimal with respect to these properties, and hence that every element $v\in V_m$ lies in some monotone path from $U_m$ to $\Gamma_m$.
By passing to a subsequence of $m>n$, we have an infinite sequence 
$n<m_1 < m_2 < \cdots$ such that 
$|V_{m_j}| = |V_{m_1}|$ all have the same size and 
$U_{m_j} = U_{m_1} = U$ are all the same fixed subset.

Let $M = (m_j)_j$ and consider the graph $\Gamma_M$.
We claim that $\Gamma_M$ satisfies Proposition \ref{prop:graph}, which will suffice by Lemma \ref{lem:ind}.
We move to proving this claim.

Every monotone path in $\Gamma$ starting in $U$ and ending in $\Gamma_{m_j}$ must 
end in $V_{m_j}$.  Thus, by minimality of $V_{m_j}$, any monotone path in $\Gamma$ starting in $V_{m_j}$ and ending in 
$\Gamma_{m_{j+1}}$ must end in $V_{m_{j+1}}$.
In particular, in the graph $\Gamma_M$, 
any infinite monotone path $\gamma$ must satisfy the following dichotomy:
either $\gamma \cap V_{m_j} = \varnothing$ for all $j$, or
there exists $j_0$ such that for all $j>j_0$ we have $\gamma \cap V_{m_j} \neq \varnothing$.

Let $\Gamma_A$ be the induced subgraph of $\Gamma_M$ on the vertex set $\bigcup_j V_{m_j}$, and 
$\Gamma_B$ the induced subgraph on the vertex set $\bigcup_j (\Gamma_{m_j} \setminus V_{m_j})$.
Note that $\Gamma_A$ is $\N$-partite with partite sets $(\Gamma_A)_j = V_{m_j}$, 
all of size $v=|V_{m_1}| < k$. 
Also, $\Gamma_B$ is $\N$-partite with partite sets
$(\Gamma_B)_j = \Gamma_{m_j} \setminus V_{m_j}$, which all have size
$w=k - v < k$.
Thus, any infinite monotone path in $\Gamma_M$ induces an infinite monotone
path in either $\Gamma_A$ or in $\Gamma_B$.
By induction,
there exist infinite monotone paths $\alpha_1,\ldots, \alpha_v$ 
in $\Gamma_A$ such that any infinite monotone path in $\Gamma_A$ must intersect 
one of these infinitely many times. 
Similarly, there are such paths $\beta_1,\ldots, \beta_w$ in $\Gamma_B$.
Thus, any infinite monotone path in $\Gamma_M$ must intersect one of
$\alpha_1,\ldots, \alpha_v , \beta_1 , \ldots, \beta_w$ infinitely many times.
This completes the proof.
\end{proof}

\begin{proof}[Proof of Theorem \ref{thm:lin.Buse}]
Note that if $\Gamma$ has linear growth then, writing $B_r$ for the ball of radius $r$ about $o$, there is some $k\in\N$ and an infinite increasing sequence $m_1,m_2,\ldots$ such that $|B_{m_n}\backslash B_{m_n-1}|=k$ for every $n$. Define $\Gamma_n=B_{m_n}\backslash B_{m_n-1}$, and define an $\N$-partite graph $\hat\Gamma$ on $\Gamma_1,\Gamma_2,\ldots$ by joining $x\in \Gamma_n$ to $x'\in \Gamma_{n+1}$ if and only if there is a path in $\Gamma$ from $x$ to $x'$ of length $m_{n+1}-m_n$.

Define a map $\alpha$ from the set of geodesic rays in $\Gamma$ starting at $o$ to the set of monotone paths in $\hat\Gamma$ (in the sense of Proposition \ref{prop:graph}) in the natural way. Specifically, if $\omega$ is a geodesic ray in $\Gamma$ starting at $o$ then $\alpha(\omega)$ is the unique monotone path in $\hat\Gamma$ passing through the same elements of $\bigcup_n\Gamma_n$ as $\omega$. Note that $\alpha$ is surjective onto the set of monotone paths in $\hat\Gamma$, and also that if $\alpha(\omega)$ and $\alpha(\omega')$ have infinite intersection then so do $\omega$ and $\omega'$.

Let $\gamma_1,\ldots,\gamma_k$ be as given by Proposition \ref{prop:graph}, and pick, using the surjectivity of $\alpha$, geodesic rays $\omega_1,\ldots,\omega_k$ in $\Gamma$ starting at $o$ such that $\alpha(\omega_i)=\gamma_i$. If $\beta$ is a geodesic ray in $\Gamma$, the tail of $\beta$ coincides with the tail of some geodesic ray $\beta'$ in $\Gamma$ starting at $o$ (see Lemma \ref{lem:ray.from.1} below). However, $\alpha(\beta')$ has infinite intersection with some $\gamma_i$ by Proposition \ref{prop:graph}, and so $\beta$ has infinite intersection with $\omega_i$. This implies in particular that $f_\beta=f_{\omega_i}$, and so $f_{\omega_1},\ldots,f_{\omega_k}$ is a complete set of horofunctions.
\end{proof}

For completeness we include a short argument for the following standard lemma.

\begin{lemma}\label{lem:ray.from.1}
If $\gamma=(x_0,x_1,x_2,\ldots)$ is a geodesic ray starting at $x_0$ then there exists some $N$ such that $(x_N,x_{N+1},\ldots)$ coincides with the tail of a geodesic ray $\omega$ starting at $o$.
\end{lemma}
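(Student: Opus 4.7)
The plan is to introduce the monovariant $c_n := d(o,x_n) - n$ along the ray $\gamma$, show that it is non-increasing and bounded below (hence eventually constant), and then paste a geodesic from $o$ to the stabilisation point $x_N$ onto the tail of $\gamma$ starting at $x_N$ to obtain the required ray $\omega$.

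First I would verify the monotonicity and lower bound on $c_n$. Since $x_n$ and $x_{n+1}$ are adjacent, $d(o,x_{n+1}) \leq d(o,x_n) + 1$, which rearranges to $c_{n+1} \leq c_n$. The reverse triangle inequality gives $d(o,x_n) \geq d(x_0,x_n) - d(o,x_0) = n - d(o,x_0)$, hence $c_n \geq -d(o,x_0)$. Since $c_n \in \Z$ is non-increasing and bounded below, it stabilises: there exist $N \in \N$ and $c^* \in \Z$ with $c_n = c^*$ for every $n \geq N$. Equivalently, $d(o,x_{N+k}) = d(o,x_N) + k$ for every $k \geq 0$.

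Next I would pick any geodesic $\sigma = (o = \sigma_0, \sigma_1, \ldots, \sigma_{d(o,x_N)} = x_N)$ from $o$ to $x_N$ and let $\omega$ be the concatenation of $\sigma$ with $(x_{N+1}, x_{N+2}, \ldots)$. Consecutive vertices of $\omega$ are adjacent, so $\omega$ is a path from $o$. To check that it is a geodesic ray it suffices to show $d(o,\omega_j) = j$ for every $j$: for $j \leq d(o,x_N)$ this holds because $\sigma$ is a geodesic, and for $j = d(o,x_N) + k > d(o,x_N)$ it follows from $d(o,x_{N+k}) = d(o,x_N) + k$ established above. The triangle inequality applied through $o$ then forces $d(\omega_i,\omega_j) = |i-j|$. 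The tail of $\omega$ starting at $\omega_{d(o,x_N)} = x_N$ is exactly $(x_N, x_{N+1}, x_{N+2}, \ldots)$, which is the tail of $\gamma$ from index $N$, as required.

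The only genuine content is the monovariant observation that $c_n$ is non-increasing; once this is recognised, the stabilisation and the concatenation are routine, and I would not expect any real obstacle in the construction of $\omega$.
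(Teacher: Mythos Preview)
Your proof is correct and essentially identical to the paper's: the monovariant $c_n = d(o,x_n) - n$ is exactly the quantity $d(x_n,o) - d(x_n,x_0)$ that the paper tracks (since $d(x_n,x_0)=n$), and both arguments then concatenate a geodesic from $o$ to the stabilisation point $x_N$ with the tail of $\gamma$. Your write-up merely adds the explicit verification that the concatenation is a geodesic ray, which the paper leaves implicit.
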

\begin{proof}
The sequence $d(x_n,o)-d(x_n,x_0)$ is non-increasing in $n$, since
\begin{equation}\label{eq:ray}
d(x_{n+1},x_0)=d(x_n,x_0)+1
\end{equation}
and $|d(x_{n+1},o)-d(x_n,o)|\le1$ for every $n$. The triangle inequality also implies that $d(x_n,o)-d(x_n,x_0)$ is bounded below by $-d(o,x_0)$. The sequence $(d(x_n,o)-d(x_n,x_0))_{n=1}^\infty$ is therefore eventually constant, say for $n\ge N$. Combined with \eqref{eq:ray}, this implies that $d(x_{n+1},o)=d(x_n,o)+1$ for $n\ge N$. The infinite path $\omega$ having initial segment some geodesic path from $o$ to $x_N$, followed by $x_{N+1},x_{N+2}, \ldots$, is therefore a geodesic ray starting at $o$.
\end{proof}

%\begin{lemma}\label{lem:ray.from.1}
%If $\gamma=(x,xz_1,xz_2,\ldots)$ is a geodesic ray starting at $x$ then there exists some $n$ such that $(xz_n,xz_{n+1},\ldots)$ coincides with the tail of a geodesic ray $\omega$ starting at $1$.
%\end{lemma}
%\begin{proof}
%The sequence $d(xz_n,1)-d(xz_n,x)$ is non-increasing in $n$, since
%\begin{equation}\label{eq:ray}
%d(xz_{n+1},x)=d(xz_n,x)+1
%\end{equation}
%and $|d(xz_{n+1},1)-d(xz_n,1)|\le1$ for every $n$. The triangle inequality also implies that $d(xz_n,1)-d(xz_n,x)$ is bounded below by $-d(1,x)$. The sequence $(d(xz_n,1)-d(xz_n,x))_{n=1}^\infty$ is therefore eventually constant, say for $n\ge N$. Combined with \eqref{eq:ray}, this implies that $d(xz_{n+1},1)=d(xz_n,1)+1$ for $n\ge N$. The infinite path $\omega$ having initial segment some geodesic path from $1$ to $xz_N$, followed by $xz_{N+1},xz_{N+2}, \ldots$, is therefore a geodesic ray starting at $1$.
%\end{proof}

%\begin{proof}
%Let $d = d(1,\gamma) = \inf_n d(1,xz_n)$, and let $n$ be the largest index such that $d = d(1,x z_n)$.
%Then, there exists a finite path realising this distance,
%$\alpha = (1=y_0, y_1, \ldots, y_d = xz_n)$, such that $\alpha$ does not intersect 
%$(x z_{n+1}, x z_{n+2} , \ldots)$.
%Since $\gamma$ is a geodesic,  we have $d(x z_{n+k} , x z_n) = k$.
%From this it can easily be seen that $(1=y_0, y_1, \ldots, y_d = x z_n , x z_{n+1}  , \ldots)$
%is a geodesic path started at $1$.
%\end{proof}

\section{The linear-growth case of Gromov's theorem}\label{sec:linear}

A group $G$ acts on the space $\{f:G\to\R\,|\,f(1)=0\}$ by $x\cdot f(y)=f(x^{-1}y)-f(x^{-1})$. Note that for a Busemann function $b_z$ we have $x\cdot b_z=b_{xz}$, and hence for a horofunction $f_\omega$ we have $x\cdot f_\omega=f_{x\omega}$.

The following observation we learned from Anders Karlsson.

\begin{lemma} \label{lem:fin.orbit}
If the set of horofunctions on a group $G$ contains a finite orbit then $G$ has a finite-index subgroup admitting a surjective homomorphism onto $\Z$.
\end{lemma}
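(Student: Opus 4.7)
The plan is to extract from the finite-orbit hypothesis a finite-index normal subgroup $H \trianglelefteq G$ on which $f$ itself restricts to a non-trivial group homomorphism $\phi : H \to \Z$; then the image $\phi(H) = d\Z$ is non-zero, and post-composing with $d\Z \cong \Z$ gives a surjective homomorphism from the finite-index subgroup $H$ onto $\Z$.

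Fix a horofunction $f$ with finite orbit $\{f_1,\ldots,f_k\}$, and take $H$ to be the kernel of the permutation action $G \to \mathrm{Sym}\{f_1,\ldots,f_k\}$, or equivalently $H = \bigcap_i \mathrm{Stab}(f_i)$. This $H$ is normal of finite index. Unwinding $h\cdot f = f$ for $h\in H$ yields $f(hy) = f(y) + f(h)$ for every $y\in G$, and a short manipulation using $f(1)=0$ then shows that $\phi(h) := f(h)$ is a group homomorphism $H \to \Z$ (integer-valued, since horofunctions are).

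The crux is showing $\phi$ is non-trivial. The argument is by contradiction, and rests on the identity
\[
g(h) = \phi(z^{-1} h z) \qquad \text{for } g = z\cdot f \text{ in the orbit and } h\in H,
\]
which falls out by writing $z^{-1}h = (z^{-1}hz)z^{-1}$, whose left factor lies in $H$ by normality, and applying the $H$-invariance relation to $f$. If $\phi$ vanished identically on $H$, this identity would force every $g$ in the orbit of $f$ to vanish on $H$ as well. Taking coset representatives $G = \bigsqcup_j x_j H$ and expanding $f(x_j h) = (x_j^{-1}\cdot f)(h) + f(x_j)$, one would then bound $|f|$ on all of $G$ by $\max_j |f(x_j)|$, contradicting the unboundedness of $f$ noted in the introduction.

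The main obstacle is precisely this non-triviality check. It is what forces $H$ to be the full orbit-kernel rather than a single stabilizer: the normality of $H$ is needed so that the conjugate $z^{-1}hz$ stays in $H$, and this step is the only place where the unboundedness of horofunctions enters.
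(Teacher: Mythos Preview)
Your proof is correct and follows the same overall strategy as the paper: pass to a finite-index subgroup $H$ fixing the horofunction $f$, observe that the invariance relation $f(hy)=f(y)+f(h)$ makes $\phi=f|_H$ a homomorphism into $\Z$, and rule out $\phi\equiv 0$ via the unboundedness of $f$.

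The one point worth flagging is your claim that normality of $H$ is \emph{forced} by the non-triviality argument. It is not: the paper simply takes $H$ to be the stabilizer of a single horofunction $f_\omega$ in the orbit (not the full kernel), and the very identity you already wrote down, $f(hy)=f(y)+f(h)$ for $h\in H$ and arbitrary $y\in G$, immediately gives $f(Hy)=f(H)+f(y)$. Hence if $f(H)=\{0\}$ then $f$ is constant on each of the finitely many left cosets $Hy$, contradicting unboundedness. No conjugation, no analysis of the other orbit members, and no normality are needed. Your route through the identity $g(h)=\phi(z^{-1}hz)$ is a valid workaround, but it is a detour around a door that was already open.
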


\begin{proof}
Letting $G$ act on the finite orbit, $G$ contains a finite-index subgroup $H$ that fixes some element $f_\omega$ of the orbit. Thus for $h\in H, g \in G$ we have $f_\omega(g)=h^{-1}\cdot f_\omega(g)=f_\omega(hg)-f_\omega(h)$, which implies that $f_\omega$ is a homomorphism $H\to\Z$ and that $f_\omega(Hg)=f_\omega(H)+f_\omega(g)$ for every $g\in G$. In particular, if $f_\omega (H) = \{0\}$ then $f_\omega$ is constant on the finitely many cosets of $H$, contradicting the fact that horofunctions are unbounded.
%(because $H$ acts trivially on $f_\omega$).  Thus, $f_\omega$ is bounded.
%However, any horofunction is unbounded, contradicting $f_\omega (H) = \{0\}$.
%(because on a geodesic $\gamma$ converging to a horofunction $h$ we always have $h(\gamma_n) = -n$).
We conclude that the image $f_\omega(H)$ is a non-trivial
subgroup of $\Z$, and thus admits a surjective homomorphism onto $\Z$.
\end{proof}

\begin{remark}
Essentially the same argument shows more generally that if $\Gamma$ is a graph of linear growth and $G<\Aut(\Gamma)$ acts transitively on the vertices of $\Gamma$ then $G$ has a finite-index subgroup admitting a surjective homomorphism onto $\Z$.
\end{remark}

\begin{proof}[Proof of Theorem \ref{thm:lin}]
Theorem \ref{thm:lin.Buse} implies that $G$ has a finite set of horofunctions.
The set of horofunctions is invariant, so in this case it contains a finite orbit.
Lemma \ref{lem:fin.orbit} therefore implies that there exists $N \lhd G$ of finite index such that $N$ admits a surjective homomorphism onto $\Z$.
Let $K \lhd N$ be the kernel of this homomorphism. % So $N/K \cong \Z$.
Since $N$ is finite index in $G$ it is finitely generated of linear growth.
Since $N/K \cong \Z$, it must be that $K$ is finite.  Hence, $N$ is finite-by-$\Z$, 
which by standard methods implies that $N$ is also $\Z$-by-finite.
Thus, $G$ contains a finite-index infinite cyclic subgroup.
%Since this subgroup is finite index, taking its normal core gives a finite index normal subgroup of $G$ which is infinite cyclic.
\end{proof}

\section*{Acknowledgements}
We are grateful to Emmanuel Breuillard, Anders Karlsson, S\'ebastien Martineau, Tom Meyerovitch, Ville Salo and an anonymous referee for helpful comments and discussions.

MT is supported by ERC grant GA617129 `GeTeMo'.  AY is supported by the Israel Science Foundation (grant no.\ 1346/15).

\end{document}